\newtheorem{thm}{Theorem}[section]
\newtheorem{lem}[thm]{Lemma}
\newtheorem{cor}[thm]{Corollary}
\theoremstyle{remark}
       \newtheorem*{rmk}{Remark}
\theoremstyle{remark}
       \newtheorem*{example}{Example}
\newcommand{\ZZ}{\mathbb{Z}}      
\newcommand{\Fq}{\mathbb{F}_q}
\newcommand{\Pn}{\mathbb{P}^n} 
\newcommand{\PFq}{\mathbb{P}^n_{\mathbb{F}_q}}
\newcommand{\Iz}{\mathcal{I}_Z}
\title[Random hypersurfaces and embedding curves]{Random hypersurfaces and embedding curves in surfaces over finite fields}
\author{Joseph Gunther}
\address{Department of Mathematics, The Graduate Center, City University of New York (CUNY); 365 Fifth Avenue,
New York, NY 10016, U.S.A. }
\email{JGunther@gradcenter.cuny.edu}
\begin{document}

\begin{abstract}
We use Poonen's closed point sieve to prove two independent results. First, we show that the obvious obstruction to embedding a curve in an unspecified smooth surface is the only obstruction over a perfect field, by proving the finite field analogue of a Bertini-type result of Altman and Kleiman.  Second, we prove a conjecture of Vakil and Wood on the asymptotic probability of hypersurface sections having a prescribed number of singularities.
\end{abstract}

\maketitle

\section{Introduction}

Poonen's geometric closed point sieve was first introduced in \cite{poonen04} to prove a finite field version of the classical Bertini smoothness theorem.  The sieve has since been applied and adapted to a range of subjects, including point-counting distributions within families of curves \cite{planecurves, bucurkedlaya, semiample} and arithmetic dynamics \cite{poonendynamics}.  In this paper, we use it to prove embedding results for quasi-projective schemes over finite fields, as well as to prove a hypersurface stabilization conjecture of Vakil and Wood. 

Given a curve, when does there exist some smooth surface into which it can be embedded? There is an obvious requirement: the curve must have no more than two tangent directions at any point, since this would be true on an ambient smooth surface.  Altman and Kleiman proved that over an infinite perfect field, this local obstruction is the only obstruction \cite{KA}.  In this paper we prove the same for finite fields, thus removing their infinite hypothesis.  The result follows from Corollary \ref{curveembedding} of the following theorem.  (Each $\zeta$ below indicates a zeta function, and for ease of notation we define the empty set to have dimension $-\infty$; see Section 2 for full notation and definitions.)

\begin{thm} \label{embedding}
Let $X$ be a smooth subscheme of $\PFq$ of dimension $m$, $Z$ a closed subscheme of $\PFq$, and $\mathcal{H}_{Z,d}$ the set of degree $d$ hypersurfaces in $\PFq$ that contain $Z$.  Let $V=X \cap Z$, and for any $e \geq 0$, let $V_e$ be the (locally closed) subset of $V$ whose closed points are exactly those of local embedding dimension $e$ in $V$.  Then if ${\displaystyle \max_e} \{dim (V_e) + e\} < m$, we have $$\lim_{d \to \infty} \frac{\#\{ H \in \mathcal{H}_{Z,d} \ | \ X \cap H \text{ is smooth of dimension } m-1\}}{\#\mathcal{H}_{Z,d}}=\frac{1}{\zeta_{X-V}(m+1){\displaystyle \prod_{e} \zeta_{V_e}(m-e)}}.$$
Conversely, if for some value of $e$ we have $dim (V_e) + e \geq m$, then the limit is 0.
\end{thm}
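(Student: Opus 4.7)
The plan is to adapt Poonen's closed point sieve \cite{poonen04} from the unconstrained system $H^0(\mathbb{P}^n, \mathcal{O}(d))$ to the constrained system $H^0(\mathbb{P}^n, \mathcal{I}_Z(d))$. I would fix an integer $r$, partition the closed points of $X$ into a low-degree set $S_r = \{P : \deg P \leq r\}$, a medium-degree range $r < \deg P \leq d/(m+1)$, and a high-degree range $\deg P > d/(m+1)$, show that the low-degree density converges as $d \to \infty$ to the predicted local Euler product, and then show that the medium- and high-degree contributions vanish as $d, r \to \infty$.

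The local computation is the heart of the positive direction. At a closed point $P \in X$ of degree $\delta$, with maximal ideal $\mathfrak{m}_P \subset \mathcal{O}_{X,P}$, the intersection $X \cap V(f)$ is smooth of dimension $m-1$ at $P$ exactly when the image $\bar f$ is nonzero in $\mathfrak{m}_P / \mathfrak{m}_P^2$. For $P \notin V$, the stalk of $\mathcal{I}_Z$ at $P$ is everything, and Serre vanishing for $d \gg 0$ gives a surjection $H^0(\mathbb{P}^n, \mathcal{I}_Z(d)) \twoheadrightarrow \mathcal{O}_{X,P}/\mathfrak{m}_P^2$ under which $\bar f$ is uniformly distributed over this $(m+1)$-dimensional $\kappa(P)$-space; the singular density is $q^{-\delta(m+1)}$, producing the factor $\zeta_{X-V}(m+1)^{-1}$. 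For $P \in V_e$, the constraint $H \supset Z$ forces $f|_X \in \mathcal{I}_V \cdot \mathcal{O}_{X,P}$, so $\bar f$ is uniform on $(\mathcal{I}_V + \mathfrak{m}_P^2)/\mathfrak{m}_P^2$; the quotient $\mathfrak{m}_P/(\mathcal{I}_V + \mathfrak{m}_P^2) \cong \mathfrak{m}_V/\mathfrak{m}_V^2$ has $\kappa(P)$-dimension $e$, so this subspace has dimension $m - e$, the singular density is $q^{-\delta(m-e)}$, and the factor is $\zeta_{V_e}(m-e)^{-1}$. Independence across points of $S_r$ follows from the Serre-vanishing surjection $H^0(\mathcal{I}_Z(d)) \twoheadrightarrow H^0(\mathcal{I}_Z/\mathcal{I}_Z \cdot \mathfrak{m}_{S_r}^2)$ for $d \gg 0$.

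The main obstacle is the high-degree bound: controlling $f$ whose restriction to $X$ is singular at some point of degree exceeding $d/(m+1)$. Poonen's strategy is to work in an affine chart with local parameters $x_1, \ldots, x_m$ of $X$ and decompose $f$ using $p$-th-power manipulation so that the joint vanishing of $f$ and its partial derivatives on a high-codimension subvariety forces $f$ to lie in a small set. I would apply this stratum by stratum: the $X - V$ case is essentially Poonen's original argument, while on $V_e$ only $m - e$ of the derivative directions remain unconstrained by $f \in \mathcal{I}_Z$, and the naive heuristic count gives roughly $q^{\delta(\dim V_e + e - m)}/\delta$ expected degree-$\delta$ singular points on $V_e$. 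The hypothesis $\max_e(\dim V_e + e) < m$ is exactly what makes these sums geometrically small; producing, on each $V_e$, a local presentation on which Poonen's derivative trick runs with only $m - e$ free directions is the subtlest technical point.

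For the converse, if $\dim V_e + e \geq m$ for some $e$, then using Lang--Weil to count closed points of $V_e$ of each degree $n$ we have $\sum_{P \in V_e} q^{-\delta(P)(m-e)} \geq c \sum_n q^{n(\dim V_e + e - m)}/n$ for some $c > 0$, which diverges, so $\prod_{P \in V_e}(1 - q^{-\delta(P)(m-e)}) = 0$. Since imposing smoothness only at $P \in S_r \cap V_e$ already yields an upper bound on the true density, and this finite partial product tends to $0$ as $r \to \infty$, the overall limit must vanish.
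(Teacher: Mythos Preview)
Your proposal is correct and matches the paper's proof essentially step for step: the same low/medium/high degree partition, the same local computation via the image of $\mathcal{I}_Z$ in $\mathcal{O}_{X,P}/\mathfrak{m}_P^2$ (your ``$\mathfrak{m}_P/\mathfrak{m}_P^2$'' for $P\notin V$ is a slip---you immediately call it $(m+1)$-dimensional, so you clearly mean $\mathcal{O}_{X,P}/\mathfrak{m}_P^2$), and the same high-degree argument on each $V_e$ by choosing local parameters $t_1,\dots,t_{m-e}$ vanishing on $Z$ and writing $f=f_0+\sum_{i=1}^{\dim V_e+1} g_i^p t_i$, which is legal precisely because $\dim V_e+1\le m-e$. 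The converse in the paper is likewise via Lang--Weil divergence of the $V_e$ Euler factor, exactly as you propose.
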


Roughly speaking, the above theorem says that, with positive probability, a hypersurface section of a smooth scheme $X$ containing a given subscheme $V$ is again smooth, provided that the dimension and singularities of the subscheme are adequately controlled. Furthermore, that positive probability is given by special values of zeta functions, which is what a naive point-by-point heuristic predicts.

\begin{rmk}In the case where the subscheme $V$ is smooth, Theorem \ref{embedding} gives the central theorem of \cite{poonen08}.  While our result is more general, its proof is ultimately inspired by that paper.  Over an infinite perfect field, under similar hypotheses on local embedding dimensions, the existence of smooth hypersurface sections was proved in \cite[Theorem 7]{KA}.  After submitting this paper for publication, the author learned that Theorem \ref{embedding} was independently obtained by Wutz in her recent thesis \cite[Theorem 2.1]{wutz}.
\end{rmk}

\begin{cor} \label{curveembedding}
Let $C$ be a reduced quasi-projective curve over $\Fq$, not necessarily smooth, irreducible, or projective.  Then there exists a smooth $r$-dimensional scheme over $\Fq$ in which $C$ can be embedded if and only if the maximal ideal at each closed point of $C$ can be generated by $r$ elements.  If $C$ is projective, the smooth scheme can be chosen projective as well.
\end{cor}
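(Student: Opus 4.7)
The ``only if'' direction is routine: if $C$ sits as a locally closed subscheme of a smooth $r$-dimensional $S/\Fq$, then at each closed point $p \in C$ the local ring $\mathcal{O}_{S,p}$ is regular of Krull dimension $r$, so $\mathfrak{m}_{S,p}$ has $r$ generators whose images in $\mathcal{O}_{C,p}$ generate $\mathfrak{m}_{C,p}$.

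For the ``if'' direction, my plan is to apply Theorem~\ref{embedding} iteratively, cutting the ambient space down one dimension at a time. Fix a locally closed embedding $C \hookrightarrow \PFq$ with $n \gg r$, let $\bar{C}$ denote its scheme-theoretic closure, take $Z = \bar{C}$, and set $X_0 = \PFq \setminus (\bar{C} \setminus C)$. Then $X_0$ is smooth of dimension $n$, and $X_0 \cap Z = C$ as a closed subscheme of $X_0$. Given $X_i \supseteq C$ smooth of dimension $n - i$, I will apply Theorem~\ref{embedding} to produce a hypersurface $H_i \in \mathcal{H}_{Z,d}$ (for $d$ sufficiently large) such that $X_{i+1} := X_i \cap H_i$ is smooth of dimension $n - i - 1$; since $\bar{C} \subseteq H_i$, the curve $C$ remains a closed subscheme of $X_{i+1}$. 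After $n - r$ iterations, $X_{n-r}$ is a smooth $r$-dimensional scheme containing $C$ as a closed subscheme. In the projective case $\bar{C} = C$, so one may take $X_0 = \PFq$; then every $X_i$ is closed in $\PFq$, hence projective.

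The crux is verifying the hypothesis $\max_e\{\dim(V_e) + e\} < m$ at every iteration, with $V = X_i \cap Z$ and $m = n - i$. Because $V$ equals $C$ throughout --- this is where excising $\bar{C}\setminus C$ from $X_0$ matters --- the stratification of $V$ by local embedding dimension is independent of $i$: $V_1$ is the smooth locus of $C$, open of dimension $1$; each $V_e$ for $2 \le e \le r$ consists of finitely many closed points (so has dimension $0$); and $V_e = \emptyset$ for $e > r$ by the hypothesis on $C$. Hence the maximum equals $\max(2,r) = r$ for $r \ge 2$, and the required inequality $r < n - i$ holds through the final step, which cuts dimension from $r+1$ down to $r$.

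The case $r = 1$ must be handled directly, since then the maximum equals $2 \not< 2 = r+1$ and the iteration cannot reach dimension one: the hypothesis forces $\mathfrak{m}_{C,p}$ to be principal at every closed point, so $C$ is regular at all closed points; combined with reducedness and one-dimensionality, this makes $C$ itself smooth, and $C \hookrightarrow C$ is the desired embedding. The main obstacle is thus just the inductive bookkeeping that keeps $C$ a closed subscheme of each $X_i$ with $V = C$ throughout; Theorem~\ref{embedding} itself does the heavy lifting of producing smooth hypersurface sections at every step.
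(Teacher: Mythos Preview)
Your proof is correct and follows essentially the same approach as the paper: embed $C$ in $\PFq$, set $Z=\bar C$ and $X_0=\PFq\setminus(\bar C\setminus C)$, and iterate Theorem~\ref{embedding} to cut the ambient dimension down to $r$. You supply details the paper leaves implicit --- the verification that $V=C$ at every step, the computation $\max_e\{\dim(V_e)+e\}\le\max(2,r)$, and the separate treatment of $r=1$ (where the iteration cannot reach dimension one but $C$ is already smooth) --- but the strategy is identical.
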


\begin{proof}[Proof of Corollary \ref{curveembedding} from Theorem \ref{embedding}]
Necessity is clear.  For sufficiency, consider $C$ embedded in $\PFq$ for some $n$. If $n = r$, we're done.  If $n < r$, embed $\PFq$ linearly into $\mathbb{P}^r_{\mathbb{F}_q}$, and we're again done.  Otherwise, let $Z=\bar{C}$ and $X=\PFq - (\bar{C} - C)$. Applying Theorem \ref{embedding} recursively $n-r$ times to find smooth hypersurface sections containing $X \cap Z$, we construct a smooth, $r$-dimensional $\Fq$-scheme $X \cap H_1 \cap \ldots \cap H_{n-r}$ containing $C$.  It is projective if $C$ is.
\end{proof}

\begin{rmk}
Over an infinite perfect field, this corollary was proven in \cite[Corollary 9]{KA}, using methods inspired by Bloch's thesis \cite[Proposition 1.2]{bloch}.  This paper shows the corollary is in fact true over any perfect field.  The starting idea of both proofs is the same: embed your curve in some large projective space, and then try to show there exist hypersurfaces that contain your curve and whose mutual intersection is smooth of the correct dimension.  Altman and Kleiman's proof in the infinite case proceeds via a Bertini-type argument that fails over finite fields since $\Fq$-points aren't dense in a rational variety; instead, we adapt Poonen's closed point sieve to prove the quantitative result in Theorem \ref{embedding}.\end{rmk}

The local embedding dimension at a simple node or cusp on a reduced curve is 2, so we have the following special case.
\begin{cor} \label{curveinsurface}
Let $C$ be a reduced, quasi-projective curve over $\Fq$ with only simple nodes and cusps.  Then $C$ can be embedded in some smooth surface over $\Fq$.
\end{cor}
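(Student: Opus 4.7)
The plan is to deduce Corollary \ref{curveinsurface} as an immediate consequence of Corollary \ref{curveembedding} applied with $r = 2$. The only thing to check is the local hypothesis of that corollary: at every closed point $P \in C$, the maximal ideal $\mathfrak{m}_P \subset \mathcal{O}_{C,P}$ can be generated by at most two elements, equivalently the local embedding dimension $\dim_{\kappa(P)}(\mathfrak{m}_P/\mathfrak{m}_P^2)$ is at most $2$.

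I would verify this case by case, according to the type of closed point of $C$. At a smooth closed point, $\mathcal{O}_{C,P}$ is a one-dimensional regular local ring, so $\mathfrak{m}_P$ is principal and the embedding dimension is $1$. At a simple node $P$, the completed local ring $\widehat{\mathcal{O}}_{C,P}$ is isomorphic to $\kappa(P)[[x,y]]/(q(x,y))$ for a rank-$2$ quadratic form $q$, possibly anisotropic if the two tangent directions are not individually $\kappa(P)$-rational; either way the images of $x$ and $y$ form a basis of $\mathfrak{m}_P/\mathfrak{m}_P^2$, so the embedding dimension is $2$. At a simple cusp, the completed local ring is a twist of $\kappa(P)[[x,y]]/(y^2 - x^3)$, and the same two generators do the job. (This is the content of the remark preceding the corollary.)

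With the uniform bound on embedding dimension established, Corollary \ref{curveembedding} with $r = 2$ immediately produces a smooth surface over $\Fq$ into which $C$ embeds, projective if $C$ is. The only genuine content in the argument is the routine local computation at the two singularity types, which is entirely characteristic- and field-independent; any finite-field subtlety has already been absorbed into the previous corollary, so I do not anticipate any substantive obstacle here.
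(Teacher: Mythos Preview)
Your proposal is correct and matches the paper's approach exactly: the paper simply remarks that the local embedding dimension at a simple node or cusp is $2$ and deduces the corollary as a special case of Corollary~\ref{curveembedding} with $r=2$. Your case-by-case verification of the embedding dimensions is a slightly more explicit version of that one-line remark.
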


\begin{rmk}
In his thesis \cite[Theorem 1.0.2]{nguyen}, N. Nguyen proved a different embedding result, answering the question of when a smooth variety $X$ over $\Fq$ of dimension $m$ admits a closed immersion into $\PFq$, for $n \geq 2m+1$.  In that case, the only obstruction is also an obvious one, though of an arithmetic nature: embedding fails exactly if, for some $e \geq 1$, $X$ has more closed points of degree $e$ than $\PFq$ itself.
\end{rmk}

Theorem \ref{embedding} also applies to higher-dimensional schemes, not just curves.  In particular, we obtain some appealing probabilistic corollaries about subschemes $V \subset \PFq$ if we take $X=\PFq$ and $Z=\bar{V}$ in the theorem.

\begin{cor} \label{randomhyp}
Let $V \subset \PFq$ be an arbitrary subscheme.  Then the probability that a random hypersurface containing $V$ will be smooth is $$\begin{cases}
1/[\zeta_{\mathbb{P}^n-\bar{V}}(n+1){\displaystyle \prod_{e} \zeta_{(\bar{V})_e}(n-e)}], & \text{if }{\displaystyle \max_e} \{dim ((\bar{V})_e) + e\} < n, \\
0, & \text{otherwise}.
\end{cases}$$
\end{cor}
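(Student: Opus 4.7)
The plan is to deduce this corollary as a direct specialization of Theorem~\ref{embedding}, taking $X = \PFq$ and $Z = \bar{V}$. With these choices, $X$ is smooth of dimension $m = n$, and the intersection $X \cap Z$ appearing in the theorem becomes simply $\bar{V}$. Since $X = \PFq$, the event ``$X \cap H$ is smooth of dimension $m-1$'' is the same as the event that $H$ itself is a smooth hypersurface of dimension $n-1$, so the numerator of the probability in Theorem~\ref{embedding} matches the one in the corollary.

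For the denominator, I would check that the set $\mathcal{H}_{\bar{V}, d}$ of degree $d$ hypersurfaces containing $\bar{V}$ coincides with the set of degree $d$ hypersurfaces containing $V$. This is immediate, since hypersurfaces are Zariski closed subschemes of $\PFq$, so one of them contains $V$ if and only if it contains the closure $\bar{V}$. Under the identification $X \cap Z = \bar{V}$, the locally closed stratification called $V_e$ in Theorem~\ref{embedding} is precisely the stratification $(\bar{V})_e$ appearing in the corollary.

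Substituting $m = n$ into the conclusion of Theorem~\ref{embedding}, both cases of the corollary fall out: the first when $\max_e \{\dim((\bar{V})_e) + e\} < n$, producing the stated product $1/[\zeta_{\mathbb{P}^n - \bar{V}}(n+1) \prod_e \zeta_{(\bar{V})_e}(n - e)]$, and the second (limit $0$) when this maximum is at least $n$, by the converse assertion of the theorem.

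I do not anticipate any real obstacle, since the corollary is essentially a repackaging of Theorem~\ref{embedding} at the specific ambient choice $X = \PFq$. The only things to keep straight are the trivial identification between hypersurfaces containing $V$ and those containing $\bar{V}$, and the notational match between $V_e$ and $(\bar{V})_e$; neither step demands further argument.
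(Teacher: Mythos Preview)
Your proposal is correct and matches the paper's approach exactly: the paper simply states (in the sentence preceding the corollary) that one takes $X=\PFq$ and $Z=\bar{V}$ in Theorem~\ref{embedding}, and you have spelled out precisely this specialization together with the trivial identifications it requires. There is nothing to add.
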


\begin{rmk}
By rationality of the zeta function \cite{dwork}, the probabilities in Theorem \ref{embedding} and Corollary \ref{randomhyp} are always rational numbers.
\end{rmk}

\begin{example}
Let $C$ be the rational curve defined in $\mathbb{P}^3_{\Fq}$ by $w=0$ and $y^2z-x^3+x^2z=0$.  Then $\zeta_{V_1}(s)^{-1}=\frac{1-q^{1-s}}{1-q^{-s}}$, $\zeta_{V_2}(s)^{-1}=1-q^{-s}$, and $\zeta_{X-V}(s)^{-1}=(1-q^{-s})(1-q^{2-s})(1-q^{3-s})$.  So, for example, the probability that a hypersurface in $\mathbb{P}^3_{\mathbb{F}_2}$ containing $C$ will be smooth is $[\zeta_{X-V}(4) \cdot \zeta_{V_1}(2) \cdot \zeta_{V_2}(1)]^{-1} = \frac{15}{128}$.
\end{example}

\begin{rmk}
We should caution that just because an asymptotic probability in Theorem \ref{embedding} or Corollary \ref{randomhyp} is 0, this does not in general rule out the existence of {\em any} smooth hypersurface sections containing the given scheme.  For example, the non-reduced scheme cut out by $y^2=0$ and $z=0$ in $\mathbb{A}^3_{\Fq} \subset \mathbb{P}^3_{\Fq}$ is contained in smooth affine hypersurfaces of arbitrarily high degree (such as those given by $z-y^d=0$); however, in accordance with Theorem \ref{embedding}, the proportion of smooth hypersurfaces decreases to 0 (exponentially with the degree, in fact).  Conversely, a curve in $\mathbb{P}^3_{\Fq}$ with a point of local embedding dimension 3 is contained in no smooth hypersurfaces at all.
\end{rmk}

The second main theorem of this paper is also an application of Poonen's sieve; in Section 4, we prove a recent conjecture of Vakil and Wood on hypersurface sections with a prescribed number of singularities.  Before stating it, we provide some motivation.

Let $X$ be a smooth, quasi-projective, $m$-dimensional scheme over $\Fq$.  Roughly speaking, \cite[Theorem~1.1]{poonen04} showed that a hypersurface section of $X$ has zero singularities with probability $\frac{1}{\zeta_X(m + 1)}$.  At the other extreme, \cite[Theorem~3.2]{poonen04} showed that a section has infinitely many singularities with probability 0.  It is then natural to ask how the probabilities are distributed across the remaining possible numbers of singularities (one, two, etc.): $$\frac{1}{\zeta_X(m + 1)} \ + \ ? \ + \ ? \ + \ldots = 1.$$

To answer this question, we need a little notation.  Let $X$ be a finite-type scheme over $\Fq$, and define $Z_X(t)=\sum_{n=0}^\infty |(Sym^nX)(\Fq)|t^n$.  Then a standard computation shows that $Z_X(q^{-s}) = \zeta_X(s)$, as defined in the next section.  The points of $Sym^nX$ correspond to formal sums of $n$ points on $X$, with possible repetition; let $Sym^n_{[\ell]} X$ be the natural subset comprising just those sums supported on exactly $\ell$ geometric points.  Analogously, define $Z_X^{[\ell]}(t)= \sum_{n=0}^\infty |(Sym^n_{[\ell]}X)(\Fq)|t^n$, and let $\zeta_X^{[\ell]}(s)=Z_X^{[\ell]}(q^{-s}).$  Based on their own motivic results about the Grothendieck ring of varieties, Vakil and Wood conjectured the following generalization of Poonen's Bertini theorem \cite[Conjecture~A]{vakwood}.

\begin{thm} \label{conjecture}
Let $X$ be a smooth $m$-dimensional subscheme of $\PFq$, $\ell \geq 0$ an integer, and $\mathcal{H}_d$ the set of degree $d$ hypersurfaces in $\PFq$.  Then $$\lim_{d \to \infty} \frac{\#\{ H \in \mathcal{H}_d \ | \ X \cap H \text{ has exactly } \ell \text{ singular geometric points}\}}{\#\mathcal{H}_d}=\frac{\zeta_X^{[\ell]}(m+1)}{\zeta_X(m+1)}.$$
\end{thm}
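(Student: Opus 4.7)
The plan is to stratify the set of hypersurfaces $H \in \mathcal{H}_d$ for which $X \cap H$ has exactly $\ell$ singular geometric points by the finite configuration of closed points of $X$ where $X \cap H$ is singular. Since a closed point $P$ contributes $\deg P$ singular geometric points under base change to $\overline{\Fq}$, the relevant configurations are finite sets $S = \{P_1, \ldots, P_k\}$ of distinct closed points of $X$ with $\sum_i \deg P_i = \ell$. Setting
$$\mathcal{E}_{S,d} = \{H \in \mathcal{H}_d : X \cap H \text{ is singular at each } P \in S \text{ and smooth elsewhere}\},$$
the hypersurfaces in question partition as $\bigsqcup_S \mathcal{E}_{S,d}$. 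The plan is to apply Poonen's closed point sieve to each configuration and sum, with an auxiliary cutoff $r$ on degrees.

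For fixed $r > 0$, I would first consider only configurations $S$ with $\max_i \deg P_i \le r$, and for each such $S$ work with the truncated event $\mathcal{E}_{S,d}^{(r)}$ imposing only singularity at each $P \in S$ and smoothness at each closed point of degree $\le r$ not in $S$. By the low-degree step of Poonen's sieve, the conditions at distinct closed points are asymptotically independent as $d \to \infty$: the density of singularity at $P$ equals $q^{-(m+1)\deg P}$, reflecting $\dim_{\Fq} \mathcal{O}_{X,P}/\mathfrak{m}_P^2 = (m+1)\deg P$, while the density of smoothness is $1 - q^{-(m+1)\deg P}$. Summing the resulting products over low-degree configurations and letting $r \to \infty$ yields
$$\left(\prod_{P}\bigl(1 - q^{-(m+1)\deg P}\bigr)\right) \cdot \sum_{S} \prod_{P \in S} \frac{q^{-(m+1)\deg P}}{1 - q^{-(m+1)\deg P}} = \frac{\zeta_X^{[\ell]}(m+1)}{\zeta_X(m+1)},$$
where the Euler product equals $1/\zeta_X(m+1)$ and the configuration sum is recognized as $\zeta_X^{[\ell]}(m+1)$ by expanding the definition of $Sym^n_{[\ell]} X(\Fq)$ as Galois-invariant effective $0$-cycles of degree $n$ whose geometric support has size exactly $\ell$ (each $P \in S$ contributing the geometric series $\sum_{j \ge 1} t^{j\deg P} = t^{\deg P}/(1 - t^{\deg P})$ at $t = q^{-(m+1)}$).

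It remains to show that the discrepancy between $\mathcal{E}_{S,d}$ and its truncation $\mathcal{E}_{S,d}^{(r)}$, as well as the total contribution from configurations containing any high-degree singular point, become negligible as $r \to \infty$. For this I would invoke Poonen's medium-degree and high-degree sieve estimates from \cite{poonen04}, which bound the probability that $X \cap H$ is singular at any closed point of degree $> r$ by a quantity tending to $0$ as $r \to \infty$, uniformly in $d$. The main obstacle is precisely this uniform-in-$d$ error control, which is what permits interchanging the limits $d \to \infty$ and $r \to \infty$; the key observation that makes it manageable is that any additional singular point only pushes the geometric singular count above $\ell$, so Poonen's tail bounds apply verbatim, and the absolute convergence of the configuration series (coming from rationality of the zeta function at $s = m+1$) permits the exchange of limits.
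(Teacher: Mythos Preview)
Your approach is correct and shares the paper's core strategy: stratify by the finite set $S$ of closed points at which $X \cap H$ is singular, compute the density of each stratum via Poonen's sieve, and identify the resulting sum with $\zeta_X^{[\ell]}(m+1)/\zeta_X(m+1)$. However, you complicate matters by introducing the degree cutoff $r$ and worrying about interchanging the limits $r \to \infty$ and $d \to \infty$. The key simplification you overlook is that since $\sum_{P \in S} \deg P = \ell$, every point of any admissible configuration has degree at most $\ell$, and $X$ has only finitely many closed points of degree $\leq \ell$; hence there are only \emph{finitely many} configurations $S$, and finite additivity of density handles the sum outright --- no cutoff on configurations, no limit exchange, no appeal to absolute convergence is needed. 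Moreover, rather than rebuilding the argument from Poonen's low/medium/high-degree sieve steps, the paper simply invokes Theorem~1.2 of \cite{poonen04} (Bertini with Taylor conditions) once per configuration, with $T = \{0\} \times \cdots \times \{0\}$ on the first-order neighborhoods of the $P_i$: that theorem directly yields the density of hypersurfaces singular at precisely the points of $S$ and smooth elsewhere as $\frac{1}{\zeta_X(m+1)} \prod_{P \in S} \frac{q^{-(m+1)\deg P}}{1 - q^{-(m+1)\deg P}}$, which is exactly the expression you eventually reach. Your cutoff-and-tail argument is in effect a rederivation of that theorem from its constituent sieve lemmas.
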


\begin{rmk}
This gives the distribution of probabilities over all possible numbers of singularities, in terms of a natural decomposition of the zeta function:  $$\frac{1}{\zeta_X(m + 1)} \ + \ \frac{\zeta_X^{[1]}(m+1)}{\zeta_X(m+1)} \ + \ \frac{\zeta_X^{[2]}(m+1)}{\zeta_X(m+1)} \ + \ldots = 1.$$
\end{rmk}

\begin{example}
What is the probability that a plane curve is singular at exactly one geometric point?  For $X = \mathbb{P}^2_{\Fq}$, we have $\zeta_X^{[1]}(s)=\frac{q^2+q+1}{q^s-1}$, and so the probability is $\frac{\zeta_X^{[1]}(3)}{\zeta_X(3)}=\frac{(q^3-1)(q^2-1)}{q^6}$.  For $\mathbb{F}_2$, this probability is $\frac{21}{64}$.  Coincidentally, by \cite[Section~3.5]{poonen04}, this is the same as the probability that it's smooth; thus over $\mathbb{F}_2$, a plane curve is precisely as likely to be smooth as it is to have exactly one singularity.  Over any other finite field, a random plane curve is more likely to be smooth than singular.
\end{example}

\noindent \textbf{Acknowledgments.} I thank Johan de Jong, Raymond Hoobler, and Joe Kramer-Miller for helpful conversations.

\section{Notation and Conventions}

Let $X$ be a scheme of finite type over $\ZZ$.  The zeta function of $X$ is defined as $$\zeta_X(s)=\prod_{\text{closed points }P \in X}\frac{1}{1-|\kappa(P)|^{-s}},$$ where $\kappa(P)$ is the residue field of $P$.  The product converges for $\text{Re}(s) > \text{dim }X$ (\cite[Theorem~1]{serrezeta}).  In the particular case where $X$ is a scheme of finite type over $\Fq$, we have that $$\zeta_X(s)=\prod_{\text{closed }P \in X}\frac{1}{1-q^{-s \text{ deg }P}}=\text{exp}\left(\sum_{n=1}^\infty \frac{|X(\mathbb{F}_{q^n})|}{n}q^{-ns}\right).$$

Following \cite{poonen04} and \cite{poonen08}, we wish to measure the density of sets of homogeneous $\Fq$-polynomials, within both the space of all such polynomials and just those vanishing on a given subscheme of $\Pn_{\Fq}$. We'll often speak informally of these densities as probabilities.  Let $S=\Fq[x_0,x_1,\ldots,x_n]$, let $S_d$ be its degree $d$ homogeneous part, and let $S_{homog} = \bigcup_{d \geq 0} S_d$.  For any $\mathcal{P} \subset S_{homog}$, we define the \emph{density} of $\mathcal{P}$ to be $$\mu(\mathcal{P}) = \lim_{d \to \infty} \frac{\#\mathcal{P} \cap S_d}{\#S_d}$$
if the limit exists.

To define the density relative to a closed subscheme $Z$ of $\Pn_{\Fq}$, let $I_{homog}$ denote the homogeneous elements of $S$ that vanish on $Z$, and $I_d$ the degree $d$ part.  For $\mathcal{P} \subset I_{homog}$, we define its \emph{density relative to $Z$} as $$\mu_Z(\mathcal{P}) = \lim_{d \to \infty} \frac{\#\mathcal{P} \cap I_d}{\#I_d}$$ if the limit exists.

Note that Theorem \ref{embedding} is equivalent to a statement about $\mu_Z$; we'll use this notation in its proof.  Theorem \ref{conjecture} is technically a statement about $\mu$, but we will simply speak of probabilities in its proof.  For $f \in S_d$, let $H_f= \text{Proj}(S/(f))$ be the associated hypersurface.  All intersections and closures are scheme-theoretic, and a subscheme means a closed subscheme of an open subscheme.  We use the convention that a product over an empty set is 1, and that the dimension of the empty set is $-\infty$.

Following \cite[Section II.7]{hartshorne}, for a morphism $Y\rightarrow X$ and a sheaf of ideals $\mathcal{I}$ on $X$, we write $\mathcal{I} \cdot \mathcal{O}_Y$ for the {\em inverse image ideal sheaf} in $\mathcal{O}_Y$.  For the definition of a simple singularity on a curve (also known as an ADE-singularity), we refer the reader to \cite{simple}.

\section{Embedding Dimension Theorem}
\label{sect:embedding}

Let $X$ and $Z$ be as in Theorem \ref{embedding}, with $I \subset S$ the vanishing ideal of $Z$.  We define the \emph{local embedding dimension} $e(P)$ of a closed point $P$ of a scheme to be the minimal number of generators for the maximal ideal $\mathfrak{m}_P$ in its stalk, or equivalently by Nakayama's Lemma, the dimension of $\mathfrak{m}_P/\mathfrak{m}_P^2$ over the residue field $\kappa(P)$.  In this section, $\Pn = \Pn_{\Fq}$, and the local embedding dimension of a point $P$ will always mean as a point of $V=X\cap Z$.  For ease of comparison, we parallel the structure of \cite{poonen08}.

\subsection{Singular Points of Low Degree}

Fix any $c$ such that $S_1 I_d = I_{d+1}$ for all $d \geq c$; for example, choose a finite homogeneous generating set for the ideal, and let $c$ be the maximal degree of its elements.  The following interpolation lemma is \cite[Lemma~2.1]{poonen08}.

\begin{lem} \label{explicitvanishing}
Let $Y$ be a finite subscheme of $\Pn$. Then the restriction map $$\phi_d: I_d = H^0(\Pn,\mathcal{I}_Z(d)) \rightarrow H^0(Y, \mathcal{I}_Z \cdot \mathcal{O}_Y(d))$$ is surjective for $d \geq c + h^0(Y, \mathcal{O}_Y)$.
\end{lem}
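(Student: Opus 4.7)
The plan is to proceed by induction on $\ell := h^0(Y, \mathcal{O}_Y)$. The base case $\ell = 0$ is trivial, since $Y = \emptyset$ forces the target of $\phi_d$ to be zero and surjectivity holds automatically for every $d \geq c$.

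For the inductive step, fix $Y$ of length $\ell \geq 1$ and $d \geq c + \ell$. Choose a closed point $P \in Y$ and pick a nonzero socle ideal $\mathcal{F} \subset \mathcal{O}_{Y,P}$ (a nonzero ideal killed by $\mathfrak{m}_{Y,P}$); this produces a closed subscheme $Y' \subsetneq Y$ fitting into a short exact sequence $0 \to \mathcal{F} \to \mathcal{O}_Y \to \mathcal{O}_{Y'} \to 0$ in which $\mathcal{F}$ is a skyscraper sheaf at $P$ and $h^0(Y', \mathcal{O}_{Y'}) < \ell$. Given $s \in H^0(Y, \mathcal{I}_Z \cdot \mathcal{O}_Y(d))$, applying the inductive hypothesis to $Y'$ at the same degree $d$ produces $f \in I_d$ restricting to $s|_{Y'}$, so that $t := s - f|_Y$ vanishes on $Y'$ and corresponds to a section of $\mathcal{F}$ at $P$. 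If $P \in Z$, then $\mathcal{I}_Z \cdot \mathcal{O}_Y$ vanishes near $P$, forcing $t = 0$ and leaving $f$ as the desired lift; otherwise the task reduces to constructing $h \in I_d$ that vanishes on $Y'$ and realizes $t$ at $P$, after which $f + h$ is the required lift of $s$.

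The construction of $h$ goes by applying the inductive hypothesis to $Y'$ once more at a slightly smaller degree (which remains above the threshold $c + h^0(Y',\mathcal{O}_{Y'})$), and then multiplying by a product of linear forms from $S_1$ in order both to promote the result into $I_d$ (using $S_1 I_j \subseteq I_{j+1}$ for $j \geq c$, i.e., the defining property of $c$) and to reach the socle direction at $P$. The main obstacle will be the local-algebra bookkeeping in this multiplication step: because the socle lies inside $\mathfrak{m}_{Y,P}$ whenever $\ell \geq 2$, the polynomial $h$ must itself vanish at $P$ to appropriate order while producing exactly the prescribed value in $\mathcal{F}$, and simultaneously kill the polynomial on all of $Y' \setminus \{P\}$. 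Arranging this requires a careful interplay between the multiplication by $S_1$, the $I$-adic filtration (governed by $c$), the socle filtration on $\mathcal{O}_{Y,P}$, and the ideal sheaf of $Y'$. The numerical hypothesis $d \geq c + h^0(Y, \mathcal{O}_Y)$ is precisely what furnishes, at each inductive step, the extra units of degree consumed by this combined lift-and-multiply maneuver.
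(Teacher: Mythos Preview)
The paper does not supply its own proof of this lemma; it is quoted as \cite[Lemma~2.1]{poonen08}. Poonen's argument there also inducts on $h^0(Y,\mathcal{O}_Y)$, but the inductive step is organized around multiplication by a single well-chosen linear form rather than a socle filtration.

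Your sketch contains two genuine gaps. First, the claim that $P\in Z$ forces $\mathcal{I}_Z\cdot\mathcal{O}_Y$ to vanish near $P$ is false: membership $P\in Z$ only gives $\mathcal{I}_Z\cdot\mathcal{O}_{Y,P}\subset\mathfrak{m}_{Y,P}$, not that it is zero. For instance, if $Y=\operatorname{Spec}\kappa(P)[\epsilon]/(\epsilon^2)$ is a length-two thickening of $P$ and $Z$ is the reduced point $P$, then $\mathcal{I}_Z\cdot\mathcal{O}_{Y,P}=(\epsilon)$, which is exactly the socle $\mathcal{F}$; this is precisely the situation arising in Lemma~\ref{lowprob} when $P\in V$. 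The correct dichotomy is whether the simple submodule $\mathcal{F}$ lies inside $\mathcal{I}_Z\cdot\mathcal{O}_{Y,P}$ or meets it trivially, and in the former case you must still construct the lift $h$ even though $P\in Z$.

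Second, your final paragraph does not actually construct $h$. Applying the inductive hypothesis to $Y'$ produces elements of $I_{d'}$ with \emph{prescribed} restriction to $Y'$; since you want $h|_{Y'}=0$, induction by itself hands you only the zero section. What remains is to exhibit a product of linear forms that simultaneously annihilates the restriction to all of $Y'$ and lands on a specified nonzero socle element at $P$, and nothing in the sketch explains why such forms exist or how the bound $d\geq c+\ell$ guarantees enough of them. Your own phrases ``the main obstacle will be'' and ``careful interplay'' acknowledge that this step is not carried out; as written, the paragraph describes the remaining difficulty rather than resolving it.
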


\begin{lem} \label{lowprob}
Suppose $\mathfrak{m} \subset \mathcal{O}_X$ is the ideal sheaf of a closed point $P \in X$.  Let $Y \subset X$ be the closed subscheme whose ideal sheaf is $\mathfrak{m}^2 \subset \mathcal{O}_X$.  Then for any $d \in \ZZ_{\geq 0}$, $$\#H^0(Y, \Iz \cdot \mathcal{O}_Y(d)) = \begin{cases}
q^{(m-e(P))\text{deg }P}, & \text{if }P \in V, \\
q^{(m+1)\text{deg }P}, & \text{if }P \not \in V.
\end{cases}$$
\end{lem}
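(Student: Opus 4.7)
The plan is to reduce everything to finite-dimensional linear algebra inside $\mathcal{O}_{X,P}/\mathfrak{m}_P^2$. Since $Y$ is a thickening of the single closed point $P$, the invertible sheaf $\mathcal{O}(d)$ becomes trivial when restricted to $Y$ (any local section of $\mathcal{O}(1)$ not vanishing at $P$ provides a trivialization), so $\#H^0(Y,\mathcal{I}_Z\cdot\mathcal{O}_Y(d))$ does not depend on $d$ and equals $\#H^0(Y,\mathcal{I}_Z\cdot\mathcal{O}_Y)$. Moreover $Y$ is affine, with $H^0(Y,\mathcal{O}_Y)=\mathcal{O}_{X,P}/\mathfrak{m}_P^2$, which by smoothness of $X$ at $P$ is a $\kappa(P)$-vector space of dimension $1+m$. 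So the task becomes computing the $\kappa(P)$-dimension of a certain submodule of this quotient.

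Next I would identify the submodule. Because $Y\hookrightarrow X\hookrightarrow\Pn$ and the scheme-theoretic intersection of $X$ and $Z$ is $V$, the inverse image ideal sheaf satisfies $\mathcal{I}_Z\cdot\mathcal{O}_X=\mathcal{I}_V$, so $\mathcal{I}_Z\cdot\mathcal{O}_Y$ coincides with the image of $\mathcal{I}_{V,P}$ in $\mathcal{O}_{X,P}/\mathfrak{m}_P^2$, i.e.\ with $(\mathcal{I}_{V,P}+\mathfrak{m}_P^2)/\mathfrak{m}_P^2$. The two cases of the lemma now fall out. If $P\notin V$, then $\mathcal{I}_{V,P}=\mathcal{O}_{X,P}$, so this image is the entire quotient $\mathcal{O}_{X,P}/\mathfrak{m}_P^2$, which has cardinality $q^{(m+1)\deg P}$. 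If $P\in V$, the image sits inside $\mathfrak{m}_P/\mathfrak{m}_P^2$ (of $\kappa(P)$-dimension $m$ by smoothness), and its cokernel there is the Zariski cotangent space $\mathfrak{m}_{V,P}/\mathfrak{m}_{V,P}^2$, which has dimension $e(P)$ by the definition of local embedding dimension. Hence the image has $\kappa(P)$-dimension $m-e(P)$, giving cardinality $q^{(m-e(P))\deg P}$.

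Most of the argument is bookkeeping with ideal sheaves and local rings; the only step requiring care is the identification $\mathcal{I}_Z\cdot\mathcal{O}_Y=(\mathcal{I}_{V,P}+\mathfrak{m}_P^2)/\mathfrak{m}_P^2$, which uses the definition of the inverse image ideal sheaf along the composite closed immersion $Y\hookrightarrow X\hookrightarrow\Pn$ and the fact that $V$ is the scheme-theoretic intersection $X\cap Z$. Once that identification is in place, the two formulas in the lemma follow directly from Nakayama-style linear algebra over $\kappa(P)$.
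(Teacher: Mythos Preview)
Your argument is correct and is essentially the same as the paper's: both reduce to linear algebra in $\mathcal{O}_{X,P}/\mathfrak{m}_P^2$ and use that the Zariski cotangent space of $V$ at $P$ has $\kappa(P)$-dimension $e(P)$. The only cosmetic difference is that the paper computes the quotient $\mathcal{O}_{Z\cap Y}$ via the short exact sequence $0\to\mathcal{I}_Z\cdot\mathcal{O}_Y(d)\to\mathcal{O}_Y(d)\to\mathcal{O}_{Z\cap Y}(d)\to 0$ and divides cardinalities, whereas you compute the ideal $(\mathcal{I}_{V,P}+\mathfrak{m}_P^2)/\mathfrak{m}_P^2$ directly.
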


\begin{proof}
Because $X$ is smooth, the space $H^0(Y,\mathcal{O}_Y(d))$ has a two-step filtration whose quotients have dimensions $1$ and $m$ over the residue field $\kappa(P)$.  Thus $\#H^0(Y,\mathcal{O}_Y(d)) = q^{(m+1)\text{deg }P}$.  If $P \in V=X \cap Z$, then $H^0(Y, \mathcal{O}_{Z \cap Y}(d))$ has a filtration whose quotients have dimensions $1$ and $e(P)$ over $\kappa(P)$; if $P \not \in V$, then $H^0(Y,\mathcal{O}_{Z \cap Y}(d))=0$.  Taking global sections for the exact sequence $$0 \rightarrow \Iz \cdot \mathcal{O}_Y(d) \rightarrow \mathcal{O}_Y(d) \rightarrow \mathcal{O}_{Z \cap Y}(d) \rightarrow 0$$ (taking global sections is exact on a zero-dimensional Noetherian scheme) gives

\begin{align*}
\#H^0(Y,\Iz \cdot \mathcal{O}_Y(d)) &= \frac{\#H^0(Y,\mathcal{O}_Y(d))}{\#H^0(Y,\mathcal{O}_{Z \cap Y}(d))}\\
&=\begin{cases}
q^{(m+1)\text{deg }P}/q^{(e(P)+1)\text{deg }P}, & \text{if }P \in V \\
q^{(m+1)\text{deg }P}, & \text{if }P \not \in V.
\end{cases}
\end{align*}
\end{proof}

For $S$ a scheme of finite type over $\Fq$, let $S_{< r}$ be the set of closed points of $S$ of degree less than $r$.  Define $S_{> r}$ and $S_{\geq r}$ similarly.

\begin{lem}[Singularities of low degree] \label{lowdensity}
Let notation and hypotheses be as in Theorem \ref{embedding}, and define $$\mathcal{P}_r = \{f \in I_{\text{homog}} \ | \ X \cap H_f \text{ is smooth of dimension } m-1 \text{ at all } P \in X_{< r}\}.$$
Then $$\mu_Z(\mathcal{P}_r) = \left(\prod_{P \in (X - V)_{< r}} (1-q^{-(m+1)\text{deg }P}) \right) \cdot \prod_e \prod_{P \in (V_e)_{< r}} (1-q^{-(m-e)\text{deg }P}).$$
\end{lem}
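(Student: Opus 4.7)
The plan is to adapt the standard step of Poonen's closed point sieve: for each closed point $P \in X_{<r}$ translate the condition that $X \cap H_f$ is smooth of dimension $m-1$ at $P$ into a nonvanishing condition on the image of $f$ in a finite-dimensional space, invoke Lemma \ref{explicitvanishing} on the disjoint union of these spaces to gain statistical independence for $d \gg 0$, and then multiply the point-by-point probabilities computed in Lemma \ref{lowprob}.

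For the local translation, I would let $Y_P \subset X$ denote the subscheme cut out by $\mathfrak{m}_P^2$. Since $X$ is smooth of dimension $m$ at $P$, the section $X \cap H_f$ is smooth of dimension $m-1$ at $P$ if and only if the image of $f$ in $H^0(Y_P, \mathcal{O}_{Y_P}(d))$ is nonzero: the three possibilities are $f$ a unit at $P$ (so $P \notin H_f$, trivially smooth), $f \in \mathfrak{m}_P \setminus \mathfrak{m}_P^2$ (so $f$ cuts a smooth divisor on $X$ at $P$), or $f \in \mathfrak{m}_P^2$ (singular). When $f \in I_d$ this image automatically lies in the subspace $H^0(Y_P, \mathcal{I}_Z \cdot \mathcal{O}_{Y_P}(d))$; and if $P \in V$ then $f$ vanishes on $Z \ni P$, so the unit case is ruled out there.

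Next, I would form $Y = \bigsqcup_{P \in X_{<r}} Y_P$, a finite subscheme of $\mathbb{P}^n$. By Lemma \ref{explicitvanishing}, for all $d$ past a threshold depending only on $r$, the restriction map
\[
\phi_d \colon I_d \longrightarrow H^0(Y, \mathcal{I}_Z \cdot \mathcal{O}_Y(d)) = \bigoplus_{P \in X_{<r}} H^0(Y_P, \mathcal{I}_Z \cdot \mathcal{O}_{Y_P}(d))
\]
is surjective, so $\phi_d(f)$ is uniformly distributed on the target as $f$ ranges uniformly over $I_d$. The fraction of $f \in I_d$ with every component of $\phi_d(f)$ nonzero is therefore
\[
\prod_{P \in X_{<r}} \left( 1 - \frac{1}{\#H^0(Y_P, \mathcal{I}_Z \cdot \mathcal{O}_{Y_P}(d))} \right),
\]
independent of $d$ once $d$ is large enough. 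Substituting $q^{(m-e)\deg P}$ for $P \in V_e$ and $q^{(m+1)\deg P}$ for $P \in X - V$ from Lemma \ref{lowprob}, and partitioning the product according to whether $P$ lies in $X - V$ or in some $V_e$, gives exactly the claimed expression for $\mu_Z(\mathcal{P}_r)$.

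The main obstacle, such as it is, is confirming that the vanishing of $\phi_d(f)$ at $P$ is precisely — not merely sufficient for — the failure of smoothness at $P$. This is exactly what the choice of $Y_P$ via $\mathfrak{m}_P^2$ is calibrated for, as the three-case analysis above shows; after that, the surjectivity of $\phi_d$ provides the independence across distinct $P$, and the substitution into Lemma \ref{lowprob} is mechanical.
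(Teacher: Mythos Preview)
Your argument is correct and follows essentially the same route as the paper: define $Y_P$ via $\mathfrak{m}_P^2$, use Lemma~\ref{explicitvanishing} on the union $Y=\bigcup Y_P$ to get surjectivity (hence equidistribution) of $\phi_d$ for $d\gg 0$, and then plug in the cardinalities from Lemma~\ref{lowprob}. Your three-case analysis making explicit why nonvanishing in $H^0(Y_P,\mathcal{O}_{Y_P}(d))$ is exactly the smoothness condition is a welcome elaboration of a point the paper states without proof.
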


\begin{proof}
Let $X_{< r} = \{P_1, \ldots, P_k\}$.  Let $\mathfrak{m}_i$ be the ideal sheaf of $P_i$ on $X$.  Let $Y_i$ be the closed subscheme of $X$ with ideal sheaf $\mathfrak{m}_i^2 \subset \mathcal{O}_X$, and let $Y=\bigcup Y_i$.  Then $H_f \cap X$ is not smooth of dimension $m - 1$ at $P_i$ exactly if the restriction of $f$ to a section of $\mathcal{O}_{Y_i}(d)$ is zero.

By Lemma \ref{explicitvanishing}, the restriction map $\phi_d: I_d \rightarrow H^0(Y, \mathcal{I}_Z \cdot \mathcal{O}_Y(d))$ is surjective for $d >> 0$, and as this is a linear map, its values are equidistributed.  So $\mu_Z(\mathcal{P}_r)$ just equals the fraction of elements in $H^0(\Pn, \Iz \cdot \mathcal{O}_Y(d))$ which are nonzero when restricted to each $Y_i$, which is constant.  Thus, by Lemma \ref{lowprob}, 
\begin{align*}
\mu_Z(\mathcal{P}_r) &= \prod_{i=1}^s \frac{\#H^0(Y_i, \Iz \cdot \mathcal{O}_{Y_i}(d)) - 1}{\#H^0(Y_i, \Iz \cdot \mathcal{O}_{Y_i}(d))}\\
&= \left(\prod_{P \in (X - V)_{< r}} (1-q^{-(m+1)\text{deg }P}) \right) \cdot \prod_e \prod_{P \in (V_e)_{< r}} (1-q^{-(m-e)\text{deg }P}).
\end{align*}
\end{proof}

\begin{cor}
If $\text{dim}(V_e) + e < m$ for all $e$, then $$\lim_{r \rightarrow \infty} \mu_Z(\mathcal{P}_r) = \frac{1}{\zeta_{X-V}(m+1){\displaystyle \prod_{e} \zeta_{V_e}(m-e)}}.$$
\end{cor}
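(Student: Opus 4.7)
The plan is essentially to pass to the limit in the explicit formula of Lemma \ref{lowdensity} and recognize each of the resulting infinite products as an Euler product for a zeta function. Concretely, the expression
\[
\mu_Z(\mathcal{P}_r) = \left(\prod_{P \in (X - V)_{< r}} (1-q^{-(m+1)\deg P}) \right) \cdot \prod_e \prod_{P \in (V_e)_{< r}} (1-q^{-(m-e)\deg P})
\]
is a finite partial product, and letting $r \to \infty$ formally replaces $(X-V)_{<r}$ by all closed points of $X-V$, and $(V_e)_{<r}$ by all closed points of $V_e$. My only real task is to justify that these infinite products converge and that their values match the reciprocal of the claimed zeta special values.

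First I would recall the Euler product definition from Section~2, namely $\zeta_Y(s)=\prod_P (1-q^{-s\deg P})^{-1}$, so that inverting each factor gives $\prod_P(1-q^{-s\deg P}) = 1/\zeta_Y(s)$ whenever the product converges. Then I would invoke Serre's convergence result, cited in Section~2 as \cite[Theorem~1]{serrezeta}, which guarantees convergence precisely when $\operatorname{Re}(s) > \dim Y$. Applying this to the first factor with $Y = X-V$ and $s=m+1$: since $\dim(X-V) \leq \dim X = m < m+1$, the product converges to $1/\zeta_{X-V}(m+1)$. Applying it to each inner factor with $Y=V_e$ and $s=m-e$: convergence requires $\dim(V_e) < m - e$, which is exactly the hypothesis $\dim(V_e) + e < m$. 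Under that hypothesis, the factor converges to $1/\zeta_{V_e}(m-e)$.

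The only mild subtlety is that the outer product $\prod_e$ ranges a priori over all nonnegative integers $e$, but it is in fact finite: the local embedding dimension of a closed point of $V \subseteq \PFq$ is bounded by $n$, so only finitely many $V_e$ are nonempty, and empty $V_e$ contribute the factor $1$ (using the convention that $\dim \emptyset = -\infty$ and empty products equal $1$). Since the outer product over $e$ is therefore finite, no issue of interchanging limits arises, and I can pass to the limit inside. Combining these observations yields
\[
\lim_{r \to \infty} \mu_Z(\mathcal{P}_r) = \frac{1}{\zeta_{X-V}(m+1)\,\prod_e \zeta_{V_e}(m-e)},
\]
as claimed. There is no substantial obstacle here — the corollary is really just a repackaging of Lemma \ref{lowdensity} via the Euler product, with the dimension hypothesis inserted exactly so that Serre's convergence criterion applies to each $V_e$.
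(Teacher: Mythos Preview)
Your proposal is correct and follows essentially the same approach as the paper: recognize the finite products of Lemma~\ref{lowdensity} as partial Euler products and use the standard convergence criterion $\operatorname{Re}(s) > \dim Y$ (the paper cites \cite[Corollary~5]{langweil} rather than Serre, but the content is the same). Your added remark that the outer product over $e$ is finite is a harmless clarification.
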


\begin{proof}
The products in Lemma \ref{lowdensity} are the reciprocals of the partial products in the definition of the zeta functions.  For convergence, we need $m - e > \text{dim}(V_e)$ for each $e$ (\cite[Corollary~5]{langweil}), which is our hypothesis exactly.
\end{proof}

\begin{cor}
If $\text{dim}(V_e) + e \geq m$ for some $e$, then $\lim_{r \rightarrow \infty} \mu_Z(\mathcal{P}_r) = 0.$
\end{cor}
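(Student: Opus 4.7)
The plan is to leverage Lemma \ref{lowdensity}'s explicit product formula and show that a single subfactor already forces the density to zero. Fix $e^*$ with $\dim(V_{e^*}) + e^* \geq m$, so $m - e^* \leq \dim V_{e^*}$. Since every factor in the product of Lemma \ref{lowdensity} lies in $[0,1)$, we have
$$\mu_Z(\mathcal{P}_r) \leq \prod_{P \in (V_{e^*})_{<r}} \bigl(1 - q^{-(m-e^*)\deg P}\bigr),$$
and it suffices to prove this partial Euler product tends to zero as $r \to \infty$.

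Taking logarithms and using $\log(1-x) \leq -x$ for $x \in [0,1)$, the problem reduces to showing that the sum $\sum_{P \in V_{e^*}} q^{-(m-e^*) \deg P}$ diverges. Heuristically this is the statement that $\zeta_{V_{e^*}}(s)$ has vanishing partial Euler product at $s = m - e^*$, which ought to hold precisely because $m - e^* \leq \dim V_{e^*}$: this is the complementary inequality to the \cite[Corollary~5]{langweil} convergence criterion invoked in the preceding corollary.

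To make this concrete, I pick a top-dimensional irreducible component of $V_{e^*}$ of dimension $d^* = \dim V_{e^*}$, pass to a geometrically irreducible component $W$ over $\overline{\Fq}$ (Galois-descending back to bound closed-point counts on $V_{e^*}$ itself), and apply Lang-Weil: $|W(\mathbb{F}_{q^n})| \geq \frac{1}{2}q^{n d^*}$ for $n$ sufficiently large. For prime $n$, the $\mathbb{F}_{q^n}$-points of $W$ are carried by closed points of degree $1$ or $n$ only, so the number of degree-$n$ closed points is at least $q^{n d^*}/(3n)$ for $n$ large. Each such point contributes at least $q^{-n(m-e^*)} \geq q^{-n d^*}$ to the sum, yielding $\geq 1/(3n)$ from each prime $n$; summing over primes $n$ gives a divergent series. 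The main obstacle is just the bookkeeping in the Lang-Weil step, in particular handling the descent when $V_{e^*}$ is not already geometrically irreducible and producing enough closed points of prime degree; no essentially new ideas beyond Lang-Weil and the elementary log-inequality are required.
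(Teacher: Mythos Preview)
Your approach is the paper's approach: isolate the $V_{e^*}$-factor from the product in Lemma~\ref{lowdensity} and invoke Lang--Weil to show it tends to zero. The paper does this in one line by citing \cite[Corollary~5]{langweil} for the fact that $\zeta_{V_{e^*}}(s)$ has a pole at $s=\dim V_{e^*}$, so the partial Euler products at $s=m-e^*\le\dim V_{e^*}$ diverge and their reciprocals vanish in the limit; you instead rederive that divergence by explicit point counting.

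Your rederivation has one genuine gap. The geometrically irreducible component $W$ you pick is in general only defined over some $\mathbb{F}_{q^k}$, not over $\Fq$. For a prime $n$ with $k\nmid n$, the iterate $\text{Frob}^n$ does not stabilize $W$, so $W(\mathbb{F}_{q^n})\subset W\cap\text{Frob}^n(W)$ lies in a subvariety of dimension strictly less than $d^*$, and your lower bound $|W(\mathbb{F}_{q^n})|\ge\tfrac{1}{2}q^{nd^*}$ fails for all large primes once $k>1$. The parenthetical ``Galois-descending back'' does not address this. The repair is routine---for instance, restrict to $n=kp$ with $p$ prime, observe that a degree-$p$ closed point of $W/\mathbb{F}_{q^k}$ yields a degree-$kp$ closed point of $V_{e^*}/\Fq$, and run the same $1/(3p)$ estimate over primes $p$---or simply cite the pole as the paper does and skip the reproof.
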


\begin{proof}
By \cite[Corollary~5]{langweil}, $\zeta_{V_e}(s)$ has a pole at $s= \text{dim}(V_e)$, so the product in Lemma \ref{lowdensity} converges to 0.  This proves the second part of Theorem \ref{embedding}.
\end{proof}

\subsection{Singular Points of Medium Degree}

\begin{lem} \label{mediumprob}
Let $P \in X$ be a closed point with deg $P \leq \frac{d-c}{m+1}$.  Then the fraction of $f \in I_d$ such that $X \cap H_f$ is not smooth of dimension $m - 1$ at $P$ equals $$\begin{cases}
q^{-(m-e(P))\text{deg }P}, & \text{if } P \in V\\
q^{-(m+1)\text{deg }P}, & \text{if } P \not \in V.
\end{cases}$$
\end{lem}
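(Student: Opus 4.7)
The strategy parallels the computation already carried out in the proof of Lemma~\ref{lowdensity}, but now applied to a single point $P$ whose degree is allowed to grow with $d$ (within the stated bound). First, I would translate the local smoothness question into a vanishing condition. Since $X$ is smooth of dimension $m$ at $P$, the intersection $X \cap H_f$ fails to be smooth of dimension $m-1$ at $P$ precisely when the image of $f$ in $\mathcal{O}_{X,P}(d)/\mathfrak{m}_P^2$ is zero; equivalently, when $f$ restricts to $0$ in $H^0(Y, \mathcal{O}_Y(d))$, where $Y \subset X$ is the closed subscheme with ideal sheaf $\mathfrak{m}_P^2$. Since $f \in I_d$, this is the same as asking for $f$ to restrict to $0$ in $H^0(Y, \mathcal{I}_Z \cdot \mathcal{O}_Y(d))$.

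Next I would invoke Lemma~\ref{explicitvanishing} for this $Y$: the restriction map
\[
\phi_d : I_d \longrightarrow H^0(Y, \mathcal{I}_Z \cdot \mathcal{O}_Y(d))
\]
is surjective provided $d \geq c + h^0(Y, \mathcal{O}_Y)$. From the filtration argument inside the proof of Lemma~\ref{lowprob}, $h^0(Y, \mathcal{O}_Y) = (m+1)\deg P$, so the hypothesis $\deg P \leq (d - c)/(m+1)$ is exactly what is needed to guarantee surjectivity. Because $\phi_d$ is a surjection of finite $\Fq$-vector spaces, its fibers are equidistributed; in particular, the fraction of $f \in I_d$ lying in $\ker \phi_d$ (that is, restricting to $0$ on $Y$) equals $1/\#H^0(Y, \mathcal{I}_Z \cdot \mathcal{O}_Y(d))$.

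Finally, Lemma~\ref{lowprob} supplies the cardinality of that target: it is $q^{(m-e(P))\deg P}$ when $P \in V$, and $q^{(m+1)\deg P}$ when $P \notin V$. Inverting yields exactly the two claimed probabilities. The main obstacle, really the only thing to verify carefully, is the equivalence between the local non-smoothness of $X \cap H_f$ at $P$ and the vanishing of $f$ on the double-point scheme $Y$; this uses only that $X$ is smooth at $P$, so that the Jacobian criterion on $X$ says $X \cap H_f$ is smooth of codimension one at $P$ iff the image of $f$ in $\mathfrak{m}_P/\mathfrak{m}_P^2$ is nonzero, which is the same as $f \notin H^0(Y, \mathfrak{m}_P \cdot \mathcal{O}_Y(d))$ and, combined with the fact that $\mathcal{O}_Y(d) = \mathcal{O}_X(d)/\mathfrak{m}_P^2 \mathcal{O}_X(d)$, simply says $f$ has nonzero image in $H^0(Y, \mathcal{O}_Y(d))$.
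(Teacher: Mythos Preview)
Your proposal is correct and follows exactly the paper's approach: define $Y$ as in Lemma~\ref{lowprob}, use Lemma~\ref{explicitvanishing} (with the degree bound ensuring $d \ge c + h^0(Y,\mathcal{O}_Y) = c + (m+1)\deg P$) to get surjectivity and hence equidistribution of $\phi_d$, and then read off the fraction as the reciprocal of the cardinality computed in Lemma~\ref{lowprob}. Your write-up is in fact more explicit than the paper's two-line proof; the only imprecision is in the final paragraph, where ``image of $f$ in $\mathfrak{m}_P/\mathfrak{m}_P^2$ is nonzero'' should really be ``image of $f$ in $\mathcal{O}_{X,P}/\mathfrak{m}_P^2$ is nonzero'' to cover the case $f(P)\neq 0$, but you already stated the equivalence correctly earlier.
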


\begin{proof}
Let $Y$ be as in Lemma \ref{lowprob}.  Then $\#H^0(Y,\Iz \cdot \mathcal{O}_{Y_i}(d))$ is given by the same lemma, which serves to calculate the desired fraction by Lemma \ref{explicitvanishing}.
\end{proof}

Define the \emph{upper density} $\bar{\mu}_Z(\mathcal{P})$ as the lim sup of the expression used to define $\mu_Z$.

\begin{lem}[Singularities of medium degree] 
Define

\begin{align*}
\mathcal{Q}_r^{\text{medium}} = \bigcup_{d \geq 0} \{f \in I_d \ | &\text{there exists } P \in X \text{ with } r \leq \text{ deg } P \ \leq \frac{d-c}{m+1}\\
& \text{such that } X \cap H_f \text{ is not smooth of dimension } m-1 \text{ at } P\}.
\end{align*}

Then ${\displaystyle \lim_{r \rightarrow \infty}} \bar{\mu}_Z(\mathcal{Q}_r^{\text{medium}}) = 0$.
\end{lem}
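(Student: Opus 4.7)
The plan is to apply a union bound to Lemma \ref{mediumprob}. For a single closed point $P \in X$ with $r \leq \deg P \leq \frac{d-c}{m+1}$, that lemma gives the exact fraction of $f \in I_d$ at which $X \cap H_f$ fails to be smooth of dimension $m-1$: it is $q^{-(m+1)\deg P}$ if $P \notin V$, and $q^{-(m-e(P))\deg P}$ if $P \in V_{e(P)}$. Summing these bounds over all admissible $P$ and taking $\limsup_{d \to \infty}$ extends the range to all $P$ with $\deg P \geq r$ and gives
$$\bar{\mu}_Z(\mathcal{Q}_r^{\text{medium}}) \leq \sum_{P \in (X-V)_{\geq r}} q^{-(m+1)\deg P} + \sum_e \sum_{P \in (V_e)_{\geq r}} q^{-(m-e)\deg P},$$
where only finitely many values of $e$ contribute since $e$ is bounded by the embedding dimension of $V$ in $\mathbb{P}^n$.

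The second step is to show each of these tail sums tends to zero as $r \to \infty$. Here I would group closed points by their degree $n$ and invoke the Lang--Weil bound \cite[Corollary~5]{langweil}: a finite-type $\Fq$-scheme $Y$ of dimension $k$ has at most $O(q^{nk}/n)$ closed points of degree exactly $n$, since $n$ times that count is bounded by $|Y(\mathbb{F}_{q^n})| = O(q^{nk})$. Applied to $X - V$, which has dimension $m$, this gives
$$\sum_{P \in (X-V)_{\geq r}} q^{-(m+1)\deg P} \leq \sum_{n \geq r} O(q^{nm}/n) \cdot q^{-(m+1)n} = \sum_{n \geq r} O(q^{-n}/n),$$
whose tail vanishes geometrically in $r$. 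For each stratum $V_e$, the analogous estimate reads $\sum_{n \geq r} O(q^{n(\dim V_e + e - m)}/n)$, which converges and has vanishing tail precisely because the hypothesis $\dim V_e + e < m$ of Theorem \ref{embedding} makes the exponent strictly negative.

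The main obstacle is not conceptually deep; it is just the bookkeeping needed to stratify $X = (X - V) \sqcup \bigsqcup_e V_e$ and verify the geometric decay on each piece. The dimension condition $\max_e \{\dim(V_e) + e\} < m$ was built into the hypothesis precisely so that every one of the resulting geometric series converges, so once the union bound and the Lang--Weil point count are combined, the conclusion $\lim_{r \to \infty} \bar{\mu}_Z(\mathcal{Q}_r^{\text{medium}}) = 0$ follows by a routine tail argument.
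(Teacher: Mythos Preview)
Your proposal is correct and follows essentially the same route as the paper: apply a union bound via Lemma~\ref{mediumprob} to obtain the tail sum over $(X-V)_{\ge r}$ and each $(V_e)_{\ge r}$, then use the Lang--Weil point-count $O(q^{kn})$ for a $k$-dimensional scheme together with the hypothesis $\dim(V_e)+e<m$ to see each tail is $O(q^{-r})$. The only cosmetic differences are that the paper cites \cite[Lemma~1]{langweil} rather than the corollary, and leaves implicit your observation that only finitely many $e$ occur.
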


\begin{proof}
By Lemma \ref{mediumprob}, we have

\begin{align*}
\frac{\#(\mathcal{Q}_r^{\text{medium}}\cap I_d)}{\#I_d} &\leq \sum_e \left(\sum_{\substack{P \in V_e\\
r \leq \text{ deg } P \ \leq \frac{d-c}{m+1}}} q^{-(m-e)\text{deg } P}\right) + \sum_{\substack{P \in X-V\\
r \leq \text{ deg } P \ \leq \frac{d-c}{m+1}}} q^{-(m+1)\text{deg } P}\\
&\leq \sum_e \left(\sum_{P \in (V_e)_{\geq r}} q^{-(m-e)\text{deg }P}\right) + \sum_{P \in (X - V)_{\geq r}} q^{-(m+1)\text{deg } P}.
\end{align*}

By \cite[Lemma~1]{langweil}, a $k$-dimensional variety has $O(q^{kl})$ closed points of degree $l$; applied to each $V_e$ and $X - V$, we see as in \cite[Lemma~3.2]{poonen08} that the above expression is $O(q^{-r})$ as $r \rightarrow \infty$, under our assumption that $\text{dim}(V_e) + e < m$ for each $e$.
\end{proof}

\subsection{Singular Points of High Degree}

\begin{lem}[Singularities of high degree off $V$] \label{highoff}
Define \\
$\mathcal{Q}_{X-V}^{\text{high}} = {\displaystyle \bigcup_{d \geq 0}}\{f \in I_d \ | \ \exists P \in (X - V)_{> \frac{d-c}{m+1}} \text{s.t. } X \cap H_f  \text{ isn't smooth of dimension } m-1 \text{ at } P \}.$ Then $\bar{\mu}_Z(\mathcal{Q}_{X-V}^{\text{high}})=0$.
\end{lem}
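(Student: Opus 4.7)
The plan is to mirror the structure of the corresponding high-degree lemma in \cite[Lemma~3.3]{poonen08}, which itself relies on the Frobenius sieve from \cite[Lemma~2.6]{poonen04}. The main new feature to accommodate is the ideal constraint $f \in I_d$ rather than $f \in S_d$; the key local observation is that every $P \in X - V$ admits an element of $I$ not vanishing at $P$, so this constraint can be locally inverted near $P$.

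First I would cover $X$ by finitely many standard affine opens $U_j$ (intersections of $X$ with loci $\{x_i \neq 0\}$), shrinking each if necessary so that it admits \'etale local parameters $t_1^{(j)},\ldots,t_m^{(j)}$; this is possible by smoothness of $X$. Since the quantifier defining $\mathcal{Q}_{X-V}^{\text{high}}$ is existential over $P$, a union bound reduces the problem to a single $U_j$. Next, fix a finite homogeneous generating set $g_Z^{(1)},\ldots,g_Z^{(k)}$ of $I$ of degrees at most $c$. For each $P \in (X-V)\cap U_j$, at least one $g_Z^{(i)}$ is nonzero at $P$, so by a further union bound I may assume a fixed $g_Z = g_Z^{(i)}$ is nonzero at every $P$ under consideration.

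On this restricted locus I would carry out Poonen's $p$-power decomposition within $I_d$. For $d$ sufficiently large relative to $c$, any $f \in I_d$ can be written as $f = g_Z \cdot h + f_\perp$ with $h \in S_{d-c}$ and $f_\perp$ in a fixed complementary subspace. For $P \in U_j$ of degree $e$ with $g_Z(P)\neq 0$, the Leibniz rule expresses ``$X\cap H_f$ is singular at $P$'' as $m+1$ linear conditions on the values of $h$ and its $t_i$-derivatives at $P$, with the normalization by $g_Z(P)\neq 0$ being exactly what makes these conditions extractable. Applying the Frobenius trick of \cite[Lemma~2.6]{poonen04} to $h \in S_{d-c}$ then bounds the fraction of $h$ for which $X \cap H_f$ is singular at $P$ by $O(q^{-(m+1)e})$, with implicit constant uniform in $P$. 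A union bound over $P$ of degree $e > (d-c)/(m+1)$ combined with the Lang--Weil estimate $O(q^{me})$ for the number of closed points of $X$ of degree $e$ \cite[Lemma~1]{langweil} yields
\[
\frac{\#(\mathcal{Q}_{X-V}^{\text{high}} \cap I_d)}{\#I_d} \ = \ O\!\left(\sum_{e > (d-c)/(m+1)} q^{-e}\right) \ = \ O\!\left(q^{-(d-c)/(m+1)}\right),
\]
which tends to $0$ as $d \to \infty$, giving $\bar{\mu}_Z(\mathcal{Q}_{X-V}^{\text{high}}) = 0$.

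The main technical obstacle is executing the Frobenius-style decomposition inside $I_d$ rather than $S_d$, since Poonen's original sieve requires free variation of coefficients. The resolution I propose above is to factor out $g_Z$ and push the entire argument through the unconstrained parameter space $S_{d-c}$ for $h$, where the sieve of \cite{poonen04} applies essentially verbatim; the role of $g_Z$ throughout is purely to enforce $f \in I_d$ while remaining invertible at each candidate $P$.
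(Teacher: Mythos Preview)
Your reduction via a generator $g_Z \in I$ that is invertible near the points under consideration is exactly the right move, and it is essentially what \cite[Lemma~4.2]{poonen08} does (the paper simply cites that lemma verbatim). The gap is in your final step. A per-point bound of $O(q^{-(m+1)e})$ followed by a union bound over all high-degree closed points of $X-V$ is the \emph{medium}-degree strategy, and it breaks down here precisely because interpolation is unavailable: for $P$ of degree $e > (d-c)/(m+1)$ the restriction $S_{d-c} \to \mathcal{O}_{X,P}/\mathfrak{m}_P^2$ need not be surjective, so the fraction of $h$ for which $g_Z h + f_\perp$ is singular at $P$ equals $q^{-\dim(\text{image})}$, which for large $e$ can be far larger than $q^{-(m+1)e}$ (indeed it is bounded below by $q^{-\dim S_{d-c}}$, a constant independent of $e$). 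Summing such a bound over the infinitely many closed points of $X-V$ of unbounded degree diverges, so the displayed estimate does not follow.

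What \cite[Lemma~2.6]{poonen04} actually provides, and what is needed after your $g_Z$-reduction, is not a per-point estimate but a global dimension-dropping argument. One writes $h = h_0 + g_1^p t_1 + \cdots + g_m^p t_m + g_0^p$ with $h_0 \in S_{d-c}$ and $g_i$ of degree roughly $d/p$; the point of the $p$-th powers is that $D_i h = D_i h_0 + s g_i^p$ depends only on $h_0$ and $g_i$. One then shows that with probability tending to $1$ the loci $W_i = \{D_1 h = \cdots = D_i h = 0\}$ satisfy $\dim W_i \le m-i$, so that $W_m$ is finite of size $O(d^m)$ by B\'ezout. Only at this last \emph{finite} stage is a union bound used, over the $O(d^m)$ points of $W_m$, and there the needed per-point bound on $g_0$ is genuinely available. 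This is exactly the mechanism spelled out in the paper's own lemma on singularities of high degree on $V_e$; your proof should invoke it rather than the medium-degree computation.
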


\begin{proof}
The proof of \cite[Lemma~4.2]{poonen08} works without change.
\end{proof}

\begin{lem}[Singularities of high degree on $V_e$]
For any $e$ such that $V_e$ is not empty, define $\mathcal{Q}_{V_e}^{\text{high}} = {\displaystyle \bigcup_{d \geq 0}}\{f \in I_d \ | \ \exists P \in (V_e)_{> \frac{d-c}{m+1}} \text{s.t. } X \cap H_f \text{ isn't smooth of dimension } m-1 \text{ at } P \}.$\\
Then $\bar{\mu}_Z(\mathcal{Q}_{V_e}^{\text{high}})=0.$
\end{lem}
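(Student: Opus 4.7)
The plan is to follow Poonen's closed-point-sieve strategy for high-degree singular points, as in \cite[Lemma~5.2]{poonen04} and \cite[Lemma~4.3]{poonen08}, with modifications to handle the fact that at $P \in V_e$ the subscheme $V \subseteq X$ has local embedding codimension $m-e$ (rather than being cut out by a regular sequence of the right length as when $V$ is smooth). The winning numerology is the one already used in the medium-degree case: the singularity of $X \cap H_f$ at $P$ is controlled by a target of $\Fq$-dimension $(m-e)\deg P$ (by Lemma \ref{lowprob}), while $\dim V_e \leq m - e - 1$ by hypothesis, so summing $q^{(m-e-1)\rho} \cdot q^{-(m-e)\rho} = q^{-\rho}$ over $\rho > (d-c)/(m+1)$ is $O(q^{-d/(m+1)}) \to 0$.

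First I would cover $\PFq$ by its $n+1$ standard affine charts and, by a union bound, restrict to singular points in one chart, say $U_0 = \{x_0 \neq 0\} \cong \mathbb{A}^n$. Next, I would apply Noetherian induction on $V_e \cap U_0$: at each closed point $P$ of $V_e$, the ideal $J_P$ of $V$ in $\mathcal{O}_{X,P}$ has image of dimension exactly $m-e$ in $\mathfrak{m}_{X,P}/\mathfrak{m}_{X,P}^2$, so by upper-semicontinuity of fiber dimension there is a dense open substratum $V_e' \subseteq V_e \cap U_0$ on which one can find regular functions $u_1, \ldots, u_{m-e} \in \mathcal{O}(X \cap U_0)$ whose classes form a basis of this $(m-e)$-dimensional image at every $P \in V_e'$. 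The induction on $V_e \setminus V_e'$ then reduces us to points of $V_e'$.

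With $u_1, \ldots, u_{m-e}$ fixed, I would choose homogeneous lifts $\tilde u_i \in I$ of degrees $d_i$ and, for $d$ large, parametrize $I_d$ by the $\Fq$-linear surjection
\[
(f_0, g_1, \ldots, g_{m-e}) \longmapsto f_0 + \sum_{i=1}^{m-e} g_i \cdot x_0^{d-d_i-\eta} \tilde u_i, \qquad f_0 \in I_d,\ g_i \in S_\eta,
\]
for a parameter $\eta$ to be chosen. Since the map is linear and surjective, uniform input gives uniform output. At a fixed $P \in V_e'$ of degree $\rho$, the singularity condition of $X \cap H_f$ at $P$ translates, using the chosen basis $u_1, \ldots, u_{m-e}$, to the simultaneous vanishing of $m-e$ elements of $\kappa(P)$ each of the form (quantity depending only on $f_0$) $+\, g_i(P)$. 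For $\eta \geq \rho$, evaluation $S_\eta \to \kappa(P)$ is surjective, so the joint map $(g_i) \mapsto (g_i(P))$ surjects onto $\kappa(P)^{m-e}$; conditioning on $f_0$ then yields singularity probability exactly $q^{-(m-e)\rho}$, independent of $f_0$. Summing over $P \in V_e$ of degree $\rho$ using $\#\{P : \deg P = \rho\} \leq Cq^{\rho(m-e-1)}$ from \cite[Lemma~1]{langweil} and over $\rho$ in the high range gives the desired $O(q^{-d/(m+1)})$ bound.

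The main obstacle is the tail range $\rho > \eta$, where the evaluation map on $S_\eta$ fails to surject onto $\kappa(P)$ and the conditional probability argument breaks down. Since $\eta$ is constrained by $\eta \leq d - \max_i d_i$, the above decomposition only controls $\rho \leq d - \max_i d_i$; for the $O(1)$-width sliver $d - \max_i d_i < \rho \leq d$, I would mimic Poonen's rescaling trick in \cite[Lemma~5.2]{poonen04}, replacing the lifts $\tilde u_i$ by products involving other coordinate powers to shift which $\rho$-range admits the surjectivity, and iterating this at finitely many scales to cover all $\rho$ up to roughly $d$; the very top $\rho$ close to $d$ is then handled by Bezout, noting that for each $f$ not rendering $V_e$ entirely singular, $V_e \cap (X \cap H_f)_{\mathrm{sing}}$ has total degree bounded polynomially in $d$, so few closed points of such large degree can occur.
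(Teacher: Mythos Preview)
Your core observation---that at $P\in V_e$ the singularity of $X\cap H_f$ is governed by only $m-e$ residue-field conditions---is correct and agrees with Lemma~\ref{lowprob}. But your overall strategy is a union bound over closed points, which is the \emph{medium}-degree mechanism; the whole purpose of the high-degree lemma is to handle the range where the union bound collapses, and you have not escaped that collapse, only pushed it from $\rho\approx d/(m+1)$ out to $\rho\approx d$. The tail $\rho>d-O(1)$ still contains infinitely many closed points of $V_e$, and neither of your proposed fixes closes the gap: there is no ``rescaling trick'' in \cite{poonen04} of the kind you describe (the high-degree lemma there is precisely the $p$-th-power decoupling you have \emph{not} used), and your B\'ezout remark only bounds the \emph{number} of singular points on a given $H_f$ once you already know the locus is zero-dimensional---it gives no probability bound, and in particular does not rule out a single singular point of huge degree. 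A secondary issue: the surjectivity of evaluation $S_\eta\to\kappa(P)$ for $\eta\ge\rho$ is not automatic for a degree-$\rho$ point of $\mathbb{A}^n$ with $n\ge 2$; one needs a regularity bound here, though this is repairable.

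The paper's argument avoids the tail entirely by abandoning the union bound. One fixes local parameters $t_1,\dots,t_n$ at a point of $V_e$ with $t_1,\dots,t_{m-e}$ vanishing on $Z$, writes $f=f_0+\sum_{i=1}^{N+1} g_i^p t_i$ with $N=\dim V_e$ (note only $N+1\le m-e$ terms, and $p$-th powers so that $D_if=D_if_0+sg_i^p$ are decoupled), and then argues \emph{geometrically}: with probability tending to $1$ the loci $W_i=V_e\cap\{D_1f=\cdots=D_if=0\}$ drop dimension at each step, so $W_N$ is finite; the final perturbation $g_{N+1}$ then generically avoids the finitely many high-degree points of $W_N$. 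No sum over closed points of unbounded degree is ever taken. If you want to salvage your approach, the missing ingredient is exactly this $p$-th-power decoupling plus dimension-drop; without it, the tail cannot be controlled by point-by-point estimates.
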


\begin{proof}
As the union of finitely many density 0 sets will be density 0, it suffices to prove the lemma with $X$ replaced by each of the sets in an open covering of $X$, so we may assume $X$ is contained in $\mathbb{A}_{\Fq}^n = \{x_0 \neq 0 \} \subset \Pn$, and we may dehomogenize by setting $x_0=1$.  This identifies $I_d \subset S_d \subset \Fq[x_0, \ldots, x_n]$ with subspaces $I'_d \subset S'_d \subset A = \Fq[x_1,\ldots,x_n]$.

Since $V$ isn't assumed smooth, we can't take it to be locally cut out by a system of local parameters, as is done in \cite{poonen08}.  Instead, fix a closed point $v \in V_e$.  Recall the exact sequence of sheaves on $V$ \cite[Section II.8]{hartshorne}: $$\mathcal{I}_V/\mathcal{I}_V^2 \rightarrow \Omega_X^1 \otimes \mathcal{O}_V \rightarrow \Omega_V^1 \rightarrow 0.$$  Thus we can choose a system of local parameters $t_1,\ldots,t_n \in A$ at $v$ on $\mathbb{A}_{\Fq}^n$ such that $t_{m+1}=t_{m+2}= \ldots = t_n =0$ defines $X$ locally at $v$, while $t_1, \ldots, t_{m-e}$ vanish on $V$.  In fact, since $V = X \cap Z$, we may choose  $t_1, \ldots, t_{m-e}$ vanishing on $Z$.

Now $dt_1, \ldots, dt_n$ are an $\mathcal{O}_{\mathbb{A}_{\Fq}^n,v}$-basis for the stalk $\Omega^1_{\mathbb{A}_{\Fq}^n,v}$.  Let $\partial_1,\ldots, \partial_n$ be the dual basis of the stalk $\mathcal{T}_{\mathbb{A}_{\Fq}^n,v}$ of the tangent sheaf.  Choose $s \in A$ with $s(v) \neq 0$ to clear denominators so that $D_i = s \partial_i$ gives a global derivation $A \rightarrow A$ for $i=1,\ldots,n$.  Then there is a neighborhood $U$ of $v$ in $\mathbb{A}_{\Fq}^n$ such that $U \cap \{ t_{m+1}=t_{m+2} = \ldots = t_n =0\} = U \cap X$, $\Omega^1_U = \oplus_{i=1}^n \mathcal{O}_U dt_i$, and $s \in \mathcal{O}_U^*$.  For $f \in I'_d$, $H_f \cap X$ fails to be smooth of dimension $m - 1$ at a point $P \in V_e \cap U$ if and only if $f(P)=(D_1f)(P)= \ldots = (D_mf)(P) = 0$.

Let $N = \text{dim}(V_e)$, $\tau=\text{max}_i\{\text{deg } t_i\}$ and $\gamma = \lfloor{\frac{d-\tau}{p}} \rfloor$, where $p$ is the characteristic of $\Fq$.  Given choices of $f_0 \in I'_d$, and $g_i \in S'_\gamma$ for $i=1, \ldots N+1$, let $f=f_0 + g_1^pt_1 + \ldots + g_{N+1}^pt_{N+1}$.  By hypothesis, $N+1 = \text{dim}(V_e) + 1 \leq m-e$, so we have each $t_i \in I'_d$. Given all possible choices of $f_0, g_1, \ldots g_{N+1}$, $f$ realizes every element of $I'_d$ the same number of times, because of $f_0$ (i.e. $f$ is a random element of $I'_d$).

This has served to make the derivatives partially independent of each other: note that for $i \leq N+1$, $D_if=D_if_0+sg_i^p$.  Given choices of $f_0, g_1, \ldots, g_i$, let $W_i = V_e \cap \{D_1f=\ldots=D_if=0\}$, which depends only on these choices. As in \cite[Lemma~2.6]{poonen04}, for $1 \leq i \leq N$, the fraction of choices of $f_0, g_1, \ldots, g_i$ such that $\text{dim}(W_i) \leq N - i$ goes to 1 as $d\rightarrow\infty$.  In particular, for most choices, $W_N$ is finite.

Next, as in \cite[Lemma~4.3]{poonen08}, given \emph{any} choice of $f_0, g_1, \ldots, g_N$ such that $W_N$ is finite, the fraction of choices of $g_{N+1}$ such that $(V_e)_{> \frac{d-c}{m+1}} \cap W_{N+1} = \emptyset$ goes to 1 as $d \rightarrow \infty$.  In conclusion (the product of two quantities that both go to 1 itself goes to 1), $\bar{\mu}_Z(\mathcal{Q}_{V_e}^{\text{high}})=0$.
\end{proof}

\begin{proof}[Proof of Theorem \ref{embedding}]
Let $\mathcal{P}=\{f \ | \ X \cap H_f \text{ is smooth of dimension } m-1\}$.  Then we have $\mathcal{P} \subset \mathcal{P}_r \subset \mathcal{P} \cup \mathcal{Q}_r^{\text{medium}} \cup \mathcal{Q}_{X-V}^{\text{high}} \cup ({\displaystyle \cup_e \mathcal{Q}_{V_e}^{\text{high}}})$, so by the preceding results $$\mu_Z(\mathcal{P})={\displaystyle \lim_{r \rightarrow \infty} \mu_Z(\mathcal{P}_r)=\frac{1}{\zeta_{X-V}(m+1){\displaystyle \prod_{e} \zeta_{V_e}(m-e)}}}.$$
\end{proof}

\section{The Probability of a Hypersurface Section Having a Given Number of Singularities}

\begin{proof}[Proof of Theorem \ref{conjecture}]
Fix a value of $\ell \geq 1$.  Suppose we have $r$ distinct closed points $\{P_1, \ldots, P_r\}$ of $X$, of any degrees $\lambda_1, \ldots, \lambda_r$ such that $\sum \lambda_i = \ell$.  Then the contribution of zero-cycles supported on exactly this set to $Z_X^{[\ell]}(t)$ is $\prod_{i=1}^r \left(\sum_{n=1}^\infty t^{n\lambda_i} \right)=\prod_{i=1}^r \frac{t^{\lambda_i}}{1-t^{\lambda_i}}$.  Plugging in $q^{-(m+1)}$ gives that their contribution to $\zeta_X^{[\ell]}(m+1)$ is $\prod_{i=1}^r \frac{q^{-\lambda_i(m+1)}}{1-q^{-\lambda_i(m+1)}}$.

On the other hand, consider the probability that an $\Fq$-hypersurface section $X \cap H$ of $X$ is singular at exactly the points $\{P_1, \ldots, P_r\}$. (Note that since $X$ and $H$ are both defined over $\Fq$, $X \cap H$ is singular at a geometric point if and only if it's singular at all of the point's $\Fq$-conjugates.)  Let $\mathfrak{m}_i$ be the ideal sheaf of the point $P_i$, and let $Z_i$ be the subscheme of $X$ defined by $\mathfrak{m}_i^2$.  Let $Z = \bigcup Z_i$.  Then by Theorem 1.2 (Bertini with Taylor conditions) of \cite{poonen04} applied to $T=\{0\} \times \ldots \times \{0\}$, the probability that an $\Fq$-hypersurface section of $X$ is singular at exactly the points $\{P_1, \ldots, P_r\}$ is $$\frac{1}{q^{\sum_i \lambda_i (m+1)}} \cdot \frac{1}{\zeta_{X-Z}(m+1)}=\frac{1}{\zeta_X(m+1)} \cdot \prod_{i=1}^r \frac{q^{-\lambda_i(m+1)}}{1-q^{-\lambda_i(m+1)}}.$$

Note that there are only finitely many such $\{ P_1, \ldots, P_r\}$, as their degree is bounded by $\ell$.  Since our density definition of probability in Section 2 is finitely additive, the probabilities of being singular at each such set add to give the total probability in Theorem \ref{conjecture}: the event of a hypersurface section being singular in precisely the points of one set is certainly disjoint from the event given by a different set of points.  Meanwhile, the series contributions of each $\{ P_1, \ldots, P_r\}$ add up to all of $\zeta_X^{[\ell]}(m+1)$.  As the series terms and the probabilities were individually comparable, we're done.
\end{proof}

\bibliographystyle{alpha}

\bibliography{paper}

\end{document}